\documentclass[runningheads]{casc}
\usepackage{amsmath}

\begin{document}

\setcounter{page}{363}

\newcommand{\C}{\textup{\bfseries C}}
\newcommand{\R}{\textup{\bfseries R}}
\newcommand{\machineeps}{\varepsilon_{\!\scriptscriptstyle M}}
\newcommand{\rem}{\textrm{rem}}
\newcommand{\res}{\textrm{res}}
\newcommand{\imi}{\textsl{i}}
\newcommand{\re}{\textrm{Re}}
\newcommand{\lc}{\textrm{\upshape lc}}
\newcommand{\prs}{\textrm{\upshape prs}}
\newcommand{\rprs}{\textrm{\upshape rprs}}
\newcommand{\syl}{\textrm{\upshape Syl}}
\newcommand{\subres}{\textrm{\upshape S}}
\newcommand{\recsubres}{\bar{\textrm{\upshape S}}} 

\title{Subresultants in Recursive Polynomial\\ Remainder Sequence%
  \thanks{This research was partially supported by Japanese Ministry
    of Education, Culture, Sports, Science and Technology under
    Grant-in-Aid for Young Scientists (B), 14780181, 2002.}}
\author{Akira Terui}
\institute{Institute of Mathematics\\
  University of Tsukuba\\
  Tsukuba, 305-8571 Japan\\
  \email{terui@math.tsukuba.ac.jp}}
\maketitle

\begin{abstract}
  We introduce concepts of ``recursive polynomial remainder sequence
  (PRS)'' and ``recursive subresultant,'' and investigate their
  properties.  In calculating PRS, if there exists the GCD (greatest
  common divisor) of initial polynomials, we calculate ``recursively''
  with new PRS for the GCD and its derivative, until a constant is
  derived.  We call such a PRS a \emph{recursive PRS}.  We define
  \emph{recursive subresultants} to be determinants representing the
  coefficients in recursive PRS by coefficients of initial
  polynomials.  Finally, we discuss usage of recursive subresultants
  in approximate algebraic computation, which motivates the present
  work.
\end{abstract}

\section{Introduction}
\label{sec:intro}

The polynomial remainder sequence (PRS) is one of fundamental tools in
computer algebra.  Although the Euclidean algorithm (see Knuth
(\cite{knu1998}) for example) for calculating PRS is simple,
coefficient growth in PRS makes the Euclidean algorithm often very
inefficient.  To overcome this problem, the mechanism of coefficient
growth has been extensively studied through the theory of
subresultants; see Collins (\cite{col1967}), Brown and Traub
(\cite{bro-tra71}), Loos (\cite{loos1983}), etc.  By the theory of
subresultant, we can remove extraneous factors of the elements of PRS
systematically.

In this paper, we consider a variation of the subresultant.  When we
calculate PRS for polynomials which have a nontrivial GCD, we usually
stop the calculation with the GCD.  However, it is sometimes useful to
continue the calculation by calculating the PRS for the GCD and its
derivative; this is necessary for calculating the number of real zeros
including their multiplicities.  We call such a PRS a ``recursive
PRS.''

Although the theory of subresultants has been developed widely, the
corresponding theory for recursive PRS is still unknown within the
author's knowledge: this is the main problem which we investigate in
this paper.  By ``recursive subresultants,'' we denote determinants
which represent elements of recursive PRS by the coefficients of
initial polynomials.

This paper is organized as follows.  In Sect.~\ref{sec:recprs}, we
introduce the concept of recursive PRS.  In
Sect.~\ref{sec:subresrecprs}, we define recursive subresultant and
show its relationship to recursive PRS.  In Sect.~\ref{sec:disc}, we
discuss briefly using recursive subresultants in approximate algebraic
computation.

\section{Recursive Polynomial Remainder Sequence (PRS)}
\label{sec:recprs}

First, we review the PRS, then define the recursive PRS.  At last, we
show recursive Sturm sequence as an example of recursive PRS.

\subsection{Definition of Recursive PRS}
\label{sec:defrecprs}

Let $R$ be an integral domain and polynomials $F$ and $G$ be in
$R[x]$.  We define a polynomial remainder sequence as follows.

\begin{definition}[Polynomial Remainder Sequence (PRS)]
  \label{def:prs}
  Let $F$ and $G$ be polynomials in $R[x]$ of degree $m$ and $n$
  ($m>n$), respectively.  A sequence
  \begin{equation}
    \label{eq:prs}
    (P_1,\ldots,P_l)
  \end{equation}
  of nonzero polynomials is called a \emph{polynomial remainder
    sequence (PRS)} for $F$ and $G$, abbreviated to $\prs(F,G)$, if it
  satisfies
  \begin{equation}
    \label{eq:prsdef}
    P_1=F,\quad P_2=G,\quad
    \alpha_i P_{i-2} = q_{i-1} P_{i-1} + \beta_i P_{i},
  \end{equation}
  for $i=3,\ldots,l$, where $\alpha_3,\ldots,\alpha_l,$
  $\beta_3,\ldots,\beta_l$ are elements of $R$ and
  $\deg(P_{i-1})>\deg(P_{i})$.  A sequence
  $((\alpha_3,\beta_3),\ldots,$ $(\alpha_l,\beta_l))$ is called a
  \emph{division rule} for $\prs(F,G)$ (see von zur Gathen
  and L\"ucking \textup{(\cite{vzg-luc2000})}).  If $P_l$ is a constant,
  then the PRS is called \emph{complete}.  \qed
\end{definition}

If $F$ and $G$ are coprime, the last element in the complete PRS for
$F$ and $G$ is a constant.  Otherwise, it equals the GCD of $F$ and
$G$ up to a constant: we have
$\prs(F,G)=(P_{1}=F,P_{2}=G,\ldots,P_{l}=\gamma\cdot \gcd(F,G))$ for
some $\gamma\in R$.  Then, we can calculate new PRS, $\prs(P_{l},
\frac{d}{dx}P_{l})$, and if this PRS ends with a non-constant
polynomial, then calculate another PRS for the last element, and so
on.  By repeating this calculation, we can calculate several PRSs
``recursively'' such that the last polynomial in the last sequence is
a constant.  Thus, we define ``recursive PRS'' as follows.
\begin{definition}[Recursive PRS]
  \label{def:recprs}
  Let $F$ and $G$ be the same as in Definition~\ref{def:prs}. Then, a
  sequence
  \begin{equation}
    \label{eq:recprs}
    (P_1^{(1)},\ldots,P_{l_1}^{(1)},
    P_1^{(2)},\ldots,P_{l_2}^{(2)},
    \ldots,
    P_1^{(t)},\ldots,P_{l_t}^{(t)})
  \end{equation}
  of nonzero polynomials is called a \emph{recursive polynomial
    remainder sequence} (recursive PRS) for $F$ and $G$, abbreviated
  to $\rprs(F,G)$, if it satisfies
  \begin{equation}
    \label{eq:recprsdef}
    \begin{split}
      & P_1^{(1)} = F,\quad P_2^{(1)}=G,\quad
      P_{l_1}^{(1)}=\gamma_1\cdot\gcd(P_1^{(1)},P_2^{(1)})\quad
      \mbox{with $\gamma_1\in R$}, \\
      & (P_1^{(1)},P_2^{(1)},\ldots,P_{l_1}^{(1)})=\prs(P_1^{(1)},P_2^{(1)}),\\
      & P_1^{(k)}=P_{l_{k-1}}^{(k-1)},\quad
      P_2^{(k)}=\frac{d}{dx}P_{l_{k-1}}^{(k-1)},\quad
      P_{l_k}^{(k)}=\gamma_k\cdot\gcd(P_1^{(k)},P_2^{(k)})\quad
      \mbox{with $\gamma_k\in R$}, \\
      & (P_1^{(k)},P_2^{(k)},\ldots,P_{l_k}^{(k)})=
      \prs(P_1^{(k)},P_2^{(k)}),
    \end{split}
  \end{equation}
  for $k=2,\ldots,t$.  If $\alpha_i^{(k)}$, $\beta_i^{(k)}\in R$
  satisfy
  \begin{equation}
    \alpha_i^{(k)} P_{i-2}^{(k)}
    =
    q_{i-1}^{(k)} P_{i-1}^{(k)}
    + 
    \beta_i^{(k)} P_i^{(k)}
  \end{equation}
  for $k=1,\ldots,t$ and $i=3,\ldots,l_k$, then a sequence
  $((\alpha_3^{(1)},\beta_3^{(1)}),\ldots,
  (\alpha_{l_t}^{(t)},\beta_{l_t}^{(t)}))$ is called a
  \emph{division rule} for $\rprs(F,G)$. 
  Furthermore, if $P_{l_t}^{(t)}$ is a constant, then the recursive
  PRS is called complete.
  \qed
\end{definition}

\subsection{Example of Recursive PRS: Recursive Sturm Sequence}
\label{sec:recsturmsec}

Sturm sequence is a variant of PRS, which is used in Sturm's method,
for calculating the number of real zeros of univariate polynomial (for
detail, see Cohen (\cite{coh93}) for example).  Note that Sturm's
theorem is valid for not only polynomials having simple zeros but also
those having multiple zeros (see Bochnak, Coste and Roy
(\cite{boc-cos-roy1998}) for example).  Here, we define ``recursive
Sturm sequence'' to calculate the number of real zeros including
multiplicities, as follows.

\begin{definition}[Recursive Sturm Sequence]
  Let $P(x)$ be a real polynomial of degree $m$.  Let a sequence of
  nonzero polynomials be defined by a recursive PRS in
  Definition~\ref{def:recprs}, calculated as
  \begin{equation}
    \label{eq:recsturmseq}
    \mathrm{(complete)}\; \rprs(P(x),\frac{d}{dx}P(x)),
  \end{equation}
  with division rule given by
  \begin{equation}
    (\alpha_i^{(k)},\beta_i^{(k)})=(1,-1),
  \end{equation}
  for $k=1,\ldots,t$ and $i=3,\ldots,l_k$.  Then, the sequence
  \textup{(\ref{eq:recsturmseq})} is called the \emph{recursive Sturm
  sequence} of $P(x)$. \qed
\end{definition}

\begin{example}[Recursive Sturm Sequence]
  \label{ex:recsturmseq}
  Let $P(x)=(x+2)^2\{(x-3)(x+1)\}^3$, and calculate the recursive
  Sturm sequence of $P(x)$.  The first sequence 
  $L_1=(P_1^{(1)},\ldots,P_4^{(1)})$ has the following elements:
  \begin{equation}
    \begin{split}
      P_1^{(1)} &= P(x)=(x+2)^2\{(x-3)(x+1)\}^3,\\
      P_2^{(1)} &=
      \frac{d}{dx}P(x)=8x^7-14x^6-102x^5+80x^4+460x^3+66x^2-558x-324,\\
      P_3^{(1)} &=
      \frac{75}{16}x^6-\frac{45}{16}x^5-60x^4-\frac{225}{8}x^3
      +\frac{3315}{16}x^2
      +\frac{4815}{16}x+\frac{945}{8},\\
      P_4^{(1)} &= \frac{128}{25}x^5-\frac{256}{25}x^4-\frac{256}{5}x^3
      +\frac{1024}{25}x^2+\frac{4224}{25}x+\frac{2304}{25}.
    \end{split}
  \end{equation}
  The second sequence $L_2=(P_1^{(2)},\ldots,P_4^{(2)})$ has the following
  elements: 
  \begin{equation}
    \begin{split}
      P_1^{(2)} &= P_4^{(1)} =
      \frac{128}{25}x^5-\frac{256}{25}x^4-\frac{256}{5}x^3
      +\frac{1024}{25}x^2
      +\frac{4224}{25}x+\frac{2304}{25},\\
      P_2^{(2)} &= \frac{d}{dx}P_4^{(1)} =
      \frac{128}{5}x^4-\frac{1024}{25}x^3-\frac{768}{5}x^2
      +\frac{2048}{25}x+\frac{4224}{25},\\
      P_3^{(2)} &= \frac{14848}{625}x^3-\frac{1536}{125}x^2
      -\frac{88576}{625}x-\frac{66048}{625},\\
      P_4^{(2)} &= \frac{12800}{841}x^2-\frac{25600}{841}x
      -\frac{38400}{841}.
    \end{split}
  \end{equation}
  The last sequence
  $L_3=(P_1^{(3)},\ldots,P_3^{(3)})$ has the following elements:
  \begin{equation}
    \begin{split}
      P_1^{(3)} &= P_4^{(2)} =
      \frac{12800}{841}x^2-\frac{25600}{841}x -\frac{38400}{841}, \\
      P_2^{(3)} &= \frac{d}{dx}P_4^{(2)} =
      \frac{25600}{841}x-\frac{25600}{841},\\
      P_3^{(3)} &= \frac{51200}{841}.
    \end{split}
  \end{equation}
  For PRS $L_k$, $k=1,2,3$, define sequences of nonzero real numbers
  $\lambda(L_k,-\infty)$ and $\lambda(L_k,+\infty)$ as
  \begin{equation}
    \begin{split}
      \lambda(L_k,-\infty) &= 
      ((-1)^{n_1^{(k)}}\lc(P_1^{(k)}),\ldots,
      (-1)^{n_{l_k}^{(k)}}\lc(P_{l_k}^{(k)})),\\ 
      \lambda(L_k,+\infty) &=
      (\lc(P_1^{(k)}),\ldots,
      \lc(P_{l_k}^{(k)})),
    \end{split}
  \end{equation}
  where $n_i^{(k)}=\deg(P_i^{(k)})$ denotes the degree of $P_i^{(k)}$
  and $\lc(P_i^{(k)})$ denotes the leading coefficients of
  $P_i^{(k)}$.  Then, $\lambda(L_k,-\infty)$ and
  $\lambda(L_k,+\infty)$ for $k=1,2,3$ are
  \begin{equation}
    \begin{split}
      \lambda(L_1,\pm\infty) &= (1,\pm8,\frac{75}{16},\pm\frac{128}{25}),\\
      \lambda(L_2,\pm\infty) &=
      (\pm\frac{128}{25},\frac{128}{5},\pm\frac{18848}{625},
      \frac{12800}{841}),\\  
      \lambda(L_3,\pm\infty) &=
      (\frac{12800}{841},\pm\frac{25600}{841},\frac{51200}{841}).
    \end{split}
  \end{equation}
  For a sequence of nonzero real numbers $L=(a_1,\ldots,a_m)$, let
  $V(L)$ be the number of sign variations of the elements of $L$.
  Then, we calculate the number of the real zeros of $P(x)$, including
  multiplicity, as
  \begin{equation}
    \sum_{k=1}^3\{V(\lambda(L_k,-\infty))-V(\lambda(L_k,+\infty))\}
    = 3+3+2=8.
  \end{equation}
  \qed
\end{example}

\section{Subresultants for Recursive PRS}
\label{sec:subresrecprs}

Let $F$ and $G$ be polynomials in $R[x]$ such that
\begin{equation}
  \label{eq:fg}
  F(x) = f_m x^m + \cdots + f_0 x^0,\quad
  G(x) = g_n x^n + \cdots + g_0 x^0,\quad
  \mbox{with $m\ge n>0$.}
\end{equation}
To make this paper self-contained and to use notations for our
definitions, we first review the fundamental theorem of subresultants,
then discuss subresultants for recursive PRS.

\subsection{Fundamental Theorem of Subresultants}

There exist several different definitions of subresultants.  Here, we
follow definitions and notations basically by von zur Gathen and
L\"ucking (\cite{vzg-luc2000}).

\begin{definition}[Sylvester Matrix]
  Let $F$ and $G$ be as in \textup{(\ref{eq:fg})}.
  The \emph{Sylvester matrix} of $F$ and $G$, denoted by $\syl(F,G)$,
  is an $(m+n)\times(m+n)$ matrix constructed from the coefficients of
  $F$ and $G$, such that
  \begin{equation}
    \label{eq:sylmat}
    \begin{split}
      \syl(F,G) &=
      \begin{pmatrix}
        f_m    &        &        & g_n    &        &  \\
        \vdots & \ddots &        & \vdots & \ddots &  \\
        f_0    &        & f_m    & g_0    &        & g_n \\
               & \ddots & \vdots &        & \ddots & \vdots \\
               &        & f_0    &        &        & g_0
      \end{pmatrix}.
      \\
      &\qquad \underbrace{\hspace{1.5cm}}_{n}
      \; \underbrace{\hspace{1.3cm}}_{m}
    \end{split}
  \end{equation}
  \qed
\end{definition}

Next, we define the ``subresultant matrix'' to derive polynomial
subresultants.

\begin{definition}[Subresultant Matrix]
  Let $F$ and $G$ be defined as in \textup{(\ref{eq:fg})}.  For $j<n$,
  the \emph{$j$-th subresultant matrix} of $F$ and $G$, denoted by
  $N^{(j)}(F,G)$, is an $(m+n-j)\times(m+n-2j)$ sub-matrix of
  $\syl(F,G)$ obtained by taking the left $n-j$ columns of
  coefficients of $F$ and the left $m-j$ columns of coefficients of
  $G$, such that
  \begin{equation}
    \label{eq:subresmat}
    \begin{split}
      N^{(j)}(F,G) &=
      \begin{pmatrix}
        f_m    &        &        & g_n    &        &  \\
        \vdots & \ddots &        & \vdots & \ddots &  \\
        f_0    &        & f_m    & g_0    &        & g_n \\
               & \ddots & \vdots &        & \ddots & \vdots \\
               &        & f_0    &        &        & g_0
       \end{pmatrix}.
      \\
      &\qquad \underbrace{\hspace{1.5cm}}_{n-j}
      \; \underbrace{\hspace{1.3cm}}_{m-j}
    \end{split}
  \end{equation}
  \qed
\end{definition}
\begin{definition}[Subresultant]
  Let $F$ and $G$ be defined as in \textup{(\ref{eq:fg})}.  For $j<n$
  and $k=0,\ldots,j$, let $N_k^{(j)}=N_k^{(j)}(F,G)$ be a sub-matrix
  of $N^{(j)}(F,G)$ obtained by taking the top $m+n-2j-1$ rows and the
  $(m+n-j-k)$-th row (note that $N^{(j)}_k(F,G)$ is a square matrix).
  Then, the polynomial
  \begin{equation}
    \label{eq:subres}
    \subres_j(F,G)
    =\det(N^{(j)}_j)x^j+\cdots+\det(N^{(j)}_0)x^0
  \end{equation}
  is called the \emph{$j$-th subresultant} of $F$ and $G$. \qed
\end{definition}
\begin{theorem}[Fundamental Theorem of Subresultants \cite{bro-tra71}]
  \label{th:fundsubres}
  Let $F$ and $G$ be defined as in \textup{(\ref{eq:fg})},
  $(P_1,\ldots,P_k)=\prs(F,G)$ be complete PRS, and
  $((\alpha_3,\beta_3),\ldots,$ $(\alpha_k,\beta_k))$ be its division
  rule.  Let $n_i=\deg(P_i)$ and $c_i=\lc(P_i)$ for $i=1,\ldots,k$,
  and $d_i=n_i-n_{i+1}$ for $i=1,\ldots,k-1$.  Then, we have
  \begin{align}
    \label{eq:subresthm1}
    \subres_j(F,G) &= 0 \quad\textrm{for $0\le j<n_k$},
    \\
    \subres_{n_i}(F,G) &= P_ic_i^{d_{i-1}-1}
    \prod_{l=3}^i
    \left\{
    \left(
      \frac{\beta_l}{\alpha_l}
    \right)^{n_{l-1}-n_i} 
    c_{l-1}^{d_{l-2}+d_{l-1}}
    (-1)^{(n_{l-2}-n_i)(n_{l-1}-n_i)}
    \right\}
    ,
    \\
    \subres_j(F,G) &= 0 \quad\textrm{for $n_i<j<n_{i-1}-1$},
    \\
    \label{eq:subresthm4}
    \subres_{n_{i-1}-1}(F,G) &= P_ic_{i-1}^{1-d_{i-1}}
    \prod_{l=3}^i
    \left\{\!
    \left(
      \frac{\beta_l}{\alpha_l}
    \right)^{n_{l-1}-n_{i-1}+1}\!\!\!
    c_{l-1}^{d_{l-2}+d_{l-1}}
    (-1)^{(n_{l-2}-n_{i-1}+1)(n_{l-1}-n_{i-1}+1)}\!
    \right\}
    ,
  \end{align}
  for $i=3,\ldots,k$. \qed 
\end{theorem}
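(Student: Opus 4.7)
My plan is to prove the theorem by induction on $i$, interpreting each step $\alpha_l P_{l-2} = q_{l-1} P_{l-1} + \beta_l P_l$ of the division rule \eqref{eq:prsdef} as a block of elementary row operations on the subresultant matrix $N^{(j)}(F,G)$. Since every $P_i$ is an $R[x]$-linear combination of $F$ and $G$, rows of $N^{(j)}$ corresponding to shifts of $F$ and $G$ can be combined into rows corresponding to shifts of $P_{i-1}$ and $P_i$; the coefficients of $\subres_j(F,G)$, being determinants of square submatrices of $N^{(j)}$, then change only by scalar factors drawn from $\alpha_l$, $\beta_l$, the leading coefficients $c_l$, and row-permutation signs.

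The preliminary step is to record, for every $j<n$, the cofactor identity $\subres_j(F,G) = U_j F + V_j G$ with $\deg U_j < n-j$ and $\deg V_j < m-j$, obtained by expanding the determinants $\det N_k^{(j)}$ in \eqref{eq:subres} along their last row and reassembling them into a polynomial. This identifies $\subres_j(F,G)$ as a polynomial lying in the $R[x]$-submodule generated by appropriate shifts of $F$ and $G$, with a degree pattern matching that of $P_i$ when $j=n_i$. The base case $i=2$ follows by direct inspection of $N^{(n_2)}$, which gives $\subres_{n_2}(F,G) = g_n^{d_1-1}P_2$. For the inductive step, I would apply the recurrence to every row of $N^{(n_i)}$ that is a shift of $P_{i-2}$, replacing it by the corresponding shift of $P_i$ (with a factor $\beta_i/\alpha_i$) after cancellation against the $P_{i-1}$-rows. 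The reduced matrix then has a determinant equal to $P_i$ times the accumulated scalar, giving \eqref{eq:subresthm1}--\eqref{eq:subresthm4}. In the \emph{gap} range $n_i<j<n_{i-1}-1$ the same reduction leaves two shifts of $P_{i-1}$ that are linearly dependent, so the determinant vanishes; for $j=n_{i-1}-1$ the reduction isolates $P_i$ in a single row but with a shifted exponent pattern, accounting for the factor $c_{i-1}^{1-d_{i-1}}$ and the bumped exponent $n_{l-1}-n_{i-1}+1$ in \eqref{eq:subresthm4}.

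The main obstacle is the bookkeeping of scalar factors. The exponent $n_{l-1}-n_i$ on $\beta_l/\alpha_l$ arises from counting how many rows are rescaled at the $l$-th stage; the exponent $d_{l-2}+d_{l-1}$ on $c_{l-1}$ accumulates from both the elimination of $P_{l-2}$-shifts and the renormalisation of the resulting $P_l$-rows; and the sign $(-1)^{(n_{l-2}-n_i)(n_{l-1}-n_i)}$ comes from permuting the $P_l$-block past the $P_{l-1}$-block to restore the canonical Sylvester layout. Verifying that these three exponents agree simultaneously in both the $\subres_{n_i}$ and $\subres_{n_{i-1}-1}$ formulas, and that the induction closes cleanly across the two cases, is the core computational labor---essentially the calculation carried out in Brown and Traub \cite{bro-tra71}.
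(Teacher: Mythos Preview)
The paper does not supply its own proof of this theorem: it is quoted verbatim from Brown and Traub~\cite{bro-tra71}, and the \qed\ simply terminates the statement. So there is nothing in the paper to compare your proposal against beyond the citation itself.

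Your sketch is essentially the standard Brown--Traub argument and is correct in outline. One point of friction with the conventions of \emph{this} paper: in the definition of $N^{(j)}(F,G)$ (see \eqref{eq:subresmat}), the \emph{columns}, not the rows, carry the coefficient vectors of the shifts $x^\ell F$ and $x^\ell G$. Hence the eliminations that replace shifts of $P_{l-2}$ by shifts of $P_l$ should be column operations, not row operations; indeed, when the paper later invokes this reduction inside the proof of Lemma~\ref{lem:recsubres}, it explicitly speaks of ``eliminations and exchanges on columns \ldots\ as shown in Brown and Traub.'' This is only a transposition of convention and does not affect the substance of your argument, but you should align the language if you intend the proof to slot into this paper. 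Your accounting of the three scalar contributions---the $(\beta_l/\alpha_l)^{n_{l-1}-n_i}$ from rescaling, the $c_{l-1}^{d_{l-2}+d_{l-1}}$ from pivoting on leading coefficients, and the permutation sign---matches the Brown--Traub bookkeeping.
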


By the above theorem, we can express coefficients of PRS by
determinants of matrices whose elements are the coefficients of
initial polynomials.

\subsection{Recursive Subresultants}

We construct ``recursive subresultant matrix'' whose determinants
represent elements of recursive PRS by the coefficients of initial
polynomials.

To make help for readers, we first show an example of recursive
subresultant matrix for recursive Sturm sequence in
Example~\ref{ex:recsturmseq}.
\begin{example}[Recursive Subresultant Matrix]
  \label{ex:recsubresmat}
  We express $P(x)$ and $\frac{d}{dx}P(x)$ in
  Example~\ref{ex:recsturmseq} by
  \begin{equation}
    P(x) = f_8 x^8 + \cdots + f_0 x^0,\quad
    \frac{d}{dx}F(x) = g_7 x^7 + \cdots + g_0 x^0.
  \end{equation}
  Let $M^{(1,5)}(F,G)=N^{(1,5)}(F,G)$, then
  the matrices $M_U^{(1,5)}(F,G)$, $M_L^{(1,5)}(F,G)$ and
  $M_L^{'(1,5)}(F,G)$ are given as
  \begin{equation}
    \label{eq:recsubresmatex}
      M^{(1,5)}(F,G)
      =
      \begin{pmatrix}
        M_U^{(1,5)} \\
        \hline
        M_L^{(1,5)}
      \end{pmatrix}
      =
      \begin{pmatrix}
        f_8  &      & g_7 &     &           \\
        f_7  & f_8  & g_6 & g_7 &           \\
        f_6  & f_7  & g_5 & g_6 & g_7 \\
        f_5  & f_6  & g_4 & g_5 & g_6 \\
        \hline
        f_4  & f_5  & g_3 & g_4 & g_5 \\
        f_3  & f_4  & g_2 & g_3 & g_4 \\
        f_2  & f_3  & g_1 & g_2 & g_3 \\
        f_1  & f_2  & g_0 & g_1 & g_2 \\
        f_0  & f_1  &     & g_0 & g_1 \\
        & f_0  &     &     & g_0 \\
      \end{pmatrix}
      ,
    \quad
      M_L^{'(1,5)}(F,G)
      =
      \begin{pmatrix}
        5f_4 & 5f_5 & 5g_3 & 5g_4 & 5g_5 \\
        4f_3 & 4f_4 & 4g_2 & 4g_3 & 4g_4 \\
        3f_2 & 3f_3 & 3g_1 & 3g_2 & 3g_3 \\
        2f_1 & 2f_2 & 2g_0 & 2g_1 & 2g_2 \\
        f_0  & f_1  &     & g_0 & g_1 \\
      \end{pmatrix}
      ,
  \end{equation}
  where horizontal lines in matrices divide them into the upper and
  the lower components.  Note that the matrix $M^{'(1,5)}(F,G)$ is
  derived from $M_L^{(1,5)}(F,G)$ by multiplying the $l$-th row by
  $6-l$ for $l=1,\ldots,5$ and deleting the lowest row.  Similarly,
  the $(2,3)$-th recursive subresultant matrix $M^{(2,3)}(F,G)$ is
  constructed as
  \begin{equation}
    \label{eq:recsubresmatex}
    M^{(2,3)}(F,G)=
    \left(
      \begin{array}{c|c|c}
        M_U^{(1,5)} &  &\\
        \cline{1-2}
                    & M_U^{(1,5)} & \\
                    \cline{2-3}
                    &             & M_U^{(1,5)}\\
                    \hline
                    &             &0 \cdots 0 \\
                    \cline{3-3}
        M_L^{(1,5)} & M_L^{'(1,5)} & \\
                    &             & M_L^{'(1,5)}\\
                    \cline{2-2}
                    & 0\cdots 0   &\\
      \end{array}
    \right).
  \end{equation}
  \qed
\end{example}

\begin{definition}[Recursive Subresultant Matrix]
  \label{def:recsubresmat}
  Let $F$ and $G$ be defined as in \textup{(\ref{eq:fg})}, and let
  $(P_1^{(1)},\ldots,P_{l_1}^{(1)},\ldots,P_1^{(t)},\ldots,P_{l_t}^{(t)})$
  be complete recursive PRS for $F$ and $G$ as in
  Definition~\ref{def:recprs}.  Let $j_0=m$ and $j_k=n_l^{(k)}$ for
  $k=1,\ldots,t$.  Then, for each tuple of numbers $(k,j)$ with
  $k=1,\ldots,t$ and $j=j_{k-1}-2,\ldots,0$, define matrix $M^{(k,j)}(F,G)$
  as follows.
  \begin{enumerate}
  \item For $k=1$, let $M^{(1,j)}(F,G)=N^{(j)}(F,G)$.
  \item For $k>1$, let $M^{(k,j)}(F,G)$ consist of the upper block and
    the lower block, defined as follows:
    \begin{enumerate}
    \item The upper block is partitioned into $(j_{k-1}-j_k-1)\times
      (j_{k-1}-j_k-1)$ blocks with diagonal blocks filled with
      $M_U^{(k-1,j_{k-1})}$, where $M_U^{(k-1,j_{k-1})}$ is a
      sub-matrix of $M^{(k-1,j_{k-1})}(F,G)$ obtained by deleting the
      bottom $j_{k-1}+1$ rows.
    \item Let $M_L^{(k-1,j_{k-1})}$ be a sub-matrix of
      $M^{(k-1,j_{k-1})}(F,G)$ obtained by taking the bottom
      $j_{k-1}+1$ rows, and let $M_L^{'(k-1,j_{k-1})}$ be a sub-matrix
      of $M_L^{(k-1,j_{k-1})}$ by multiplying the
      $(j_{k-1}+1-\tau)$-th rows by $\tau$ for
      $\tau=j_{k-1},\ldots,1$, then by deleting the bottom row.  Then,
      the lower block consists of $j_{k-1}-j-1$ blocks of
      $M_L^{(k-1,j_{k-1})}$ such that the leftmost block is placed at
      the top row of the container block and the right-side block is
      placed down by 1 row from the left-side block, then followed by
      $j_{k-1}-j$ blocks of $M_L^{'(k-1,j_{k-1})}$ placed by the same
      manner as $M_L^{(k-1,j_{k-1})}$.
    \end{enumerate}
  \end{enumerate}
\begin{figure}[t]
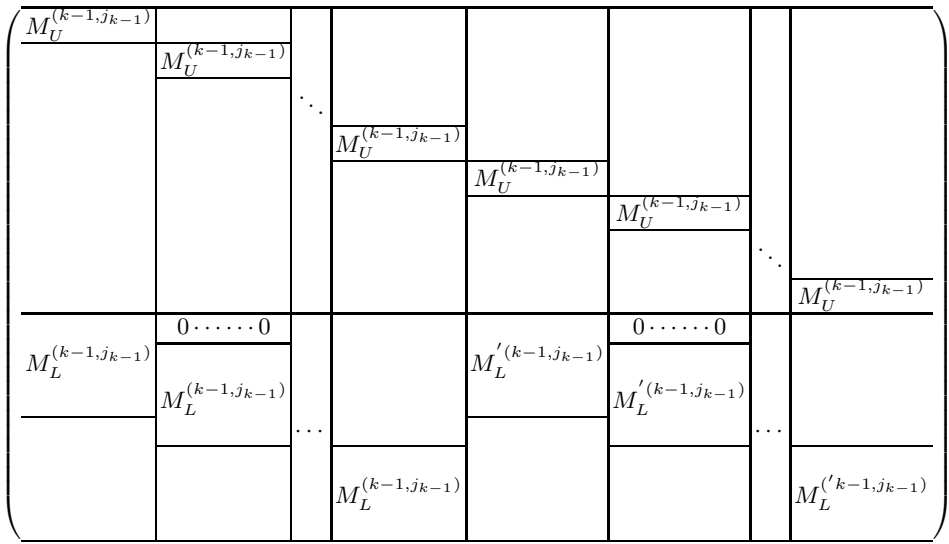

  \centering
  \begin{multline*}
    M^{(k,j)}(F,G)=
    \\
    \small
  \left(
  \begin{array}{c|c|c|c|c|c|c|c}
    \hline
    M_U^{(k-1,j_{k-1})} &            &        &          &
                  &            &        &         
     \\
     \cline{1-2}
             & M_U^{(k-1,j_{k-1})}   &        &          &
             &            &        &          
     \\
     \cline{2-2}
             &            & \ddots &          &
             &            &        &          
     \\
     \cline{4-4}
             &            &        & M_U^{(k-1,j_{k-1})} &
             &            &        &          
     \\
     \cline{4-5}
              &            &        &          &
     M_U^{(k-1,j_{k-1})} &            &        &          
     \\
     \cline{5-6}
              &            &        &          &
              & M_U^{(k-1,j_{k-1})}   &        &
     \\
     \cline{6-6}
              &            &        &          &
               &            & \ddots &         
     \\
     \cline{8-8}
               &            &        &         &
              &            &        & M_U^{(k-1,j_{k-1})}
     \\
     \hline
              & 0 \cdots\cdots 0 &        &          &
              & 0 \cdots\cdots 0 &        &          
     \\
     \cline{2-2}\cline{6-6}
     M_L^{(k-1,j_{k-1})} &            &        &          &
     M_L^{'(k-1,j_{k-1})} &            &        & 
     \\
              & M_L^{(k-1,j_{k-1})}   &        &     &
              & M_L^{'(k-1,j_{k-1})}   &        &          
    \\       
    \cline{1-1}\cline{5-5}
               &            & \cdots &          &
              &            & \cdots &          
    \\
    \cline{2-2}\cline{4-4}\cline{6-6}\cline{8-8}
               &            &        &          &
              &            &        &          
    \\
              &            &        & M_L^{(k-1,j_{k-1})} &
              &            &        & M_L^{('k-1,j_{k-1})}
    \\
              &            &        &          &
              &            &        &          
    \\
    \hline
  \end{array}
  \right).
  \end{multline*}
  \caption{Illustration of $M^{(k,j)}(F,G)$.  Note that
    the number of blocks $M_L^{(k-1,j_{k-1})}$ is $j_{k-1}-j-1$,
    whereas that of $M_L^{'(k-1,j_{k-1})}$ is $j_{k-1}-j$; see
    Definition~\ref{def:recsubresmat} for details.}
  \label{fig:recsubresmat}
\end{figure}

As a result, $M^{(k,j)}(F,G)$ becomes as shown in
Fig.~\ref{fig:recsubresmat}.  Then, $M^{(k,j)}(F,G)$ is called the
\emph{$(k,j)$-th recursive subresultant matrix} of $F$ and $G$. \qed
\end{definition}

\begin{proposition}
  \label{prop:recsubresmat}
  For $k=1,\ldots,t$ and $j<j_{k-1}-1$, the numbers of rows and
  columns of $M^{(k,j)}(F,G)$, the $(k,j)$-th recursive subresultant
  matrix of $F$ and $G$ are
  \begin{equation}
    \label{eq:recsubresmatrow}
    (m+n-2j_1)
    \left\{
      \prod_{l=2}^{k-1}(2j_{l-1}-2j_l-1)
    \right\}
    (2j_{k-1}-2j-1)
    +j
  \end{equation}
  and
  \begin{equation}
    \label{eq:recsubresmatcol}
    (m+n-2j_1)
    \left\{
      \prod_{l=2}^{k-1}(2j_{l-1}-2j_l-1)
    \right\}
    (2j_{k-1}-2j-1)
    ,
  \end{equation}
  respectively.
\end{proposition}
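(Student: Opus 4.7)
The argument proceeds by induction on $k$. The base case $k=1$ is immediate: Definition~\ref{def:recsubresmat}(1) gives $M^{(1,j)}(F,G)=N^{(j)}(F,G)$, which by construction is an $(m+n-j)\times(m+n-2j)$ matrix, as required. The substantive content lies in the inductive step $k\ge 2$, where $M^{(k,j)}(F,G)$ is analyzed via its block decomposition into an upper block (a block-diagonal arrangement of copies of $M_U^{(k-1,j_{k-1})}$) and a lower block (a staircase of copies of $M_L^{(k-1,j_{k-1})}$ and $M_L^{'(k-1,j_{k-1})}$).

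For $k\ge 2$, write $C$ and $R$ for the column- and row-counts of $M^{(k-1,j_{k-1})}(F,G)$; by the induction hypothesis, $R=C+j_{k-1}$. Since $M_U^{(k-1,j_{k-1})}$ is obtained by deleting the bottom $j_{k-1}+1$ rows, its size is $(C-1)\times C$; similarly $M_L^{(k-1,j_{k-1})}$ is $(j_{k-1}+1)\times C$ and $M_L^{'(k-1,j_{k-1})}$ is $j_{k-1}\times C$. Both the upper and lower blocks of $M^{(k,j)}(F,G)$ span the same $2j_{k-1}-2j-1$ block columns of width $C$, so the column count is $(2j_{k-1}-2j-1)\cdot C$, and the induction hypothesis for $C$ gives the claimed column formula.

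For the row count, the upper block contributes $(2j_{k-1}-2j-1)(C-1)$ rows. The lower block, by Definition~\ref{def:recsubresmat}(2)(b), consists of $j_{k-1}-j-1$ copies of $M_L^{(k-1,j_{k-1})}$ followed by $j_{k-1}-j$ copies of $M_L^{'(k-1,j_{k-1})}$, each block group placed in staircase fashion with leftmost block at the top row of the container. A direct calculation shows that the last $M_L$-block ends at row $(j_{k-1}-j-1)+(j_{k-1}+1)-1=2j_{k-1}-j-1$ and the last $M_L'$-block ends at row $(j_{k-1}-j)+j_{k-1}-1=2j_{k-1}-j-1$; hence the lower block has exactly $2j_{k-1}-j-1$ rows. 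Summing with the upper contribution yields $(2j_{k-1}-2j-1)\,C+j$, which matches the asserted row formula after substituting the inductive expression for $C$. The principal obstacle is parsing the staircase description correctly and recognizing that the two staircases terminate at the same row, so that the lower block fits into $2j_{k-1}-j-1$ rows (rather than, e.g., $3j_{k-1}-2j-2$, which would result if the $M_L'$-staircase were treated as a continuation of the $M_L$-staircase); this coincidence reduces to the identity $(j_{k-1}-j-1)+(j_{k-1}+1)=(j_{k-1}-j)+j_{k-1}$.
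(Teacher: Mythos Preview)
Your proof is correct and follows essentially the same inductive strategy as the paper's own proof: both handle $k=1$ by appealing directly to the dimensions of $N^{(j)}(F,G)$, and for $k\ge 2$ both compute the row count by separately analysing the upper block (a block-diagonal of $2j_{k-1}-2j-1$ copies of $M_U^{(k-1,j_{k-1})}$, contributing $(C-1)(2j_{k-1}-2j-1)$ rows) and the lower block (the staircase, contributing $2j_{k-1}-j-1$ rows), then adding. Your treatment is in fact a bit more explicit than the paper's: you spell out why both staircases terminate at the same row $2j_{k-1}-j-1$, whereas the paper simply asserts the lower-block height.
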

\begin{proof}
  By induction on $k$.  For $k=1$, by definition of the subresultant
  matrix, we see that $M^{(1,j)}(F,G)$ is a $(m+n-j)\times(m+n-2j)$
  matrix. Let us assume that equations
  \textup{(\ref{eq:recsubresmatrow})} and
  \textup{(\ref{eq:recsubresmatcol})} are valid for
  $1,\ldots,k-1$.  Then, we calculate the numbers of the rows
  and columns of $M^{(k,j)}(F,G)$ as follows.
\begin{enumerate}
\item The numbers of rows of $M_L^{(k-1,j_{k-1})}$ and
  $M_L^{'(k-1,j_{k-1})}$ are equal to $j_{k-1}+1$ and $j_{k-1}$,
  respectively, hence the number of rows a block which consists of
  $M_L^{(k-1,j_{k-1})}$ and $M_L^{'(k-1,j_{k-1})}$ in $M^{(k,j)}(F,G)$
  equals
  \begin{equation}
    \label{eq:recsubresmatrow-low}
    2j_{k-1}-j-1.
  \end{equation}
  On the other hand, the number of rows of $M_U^{(k-1,j_{k-1})}$ is
  equal to $(m+n-2j_1)\{\prod_{l=2}^{k-1}(2j_{l-1}-2j_l-1)\}-1$, hence
  the number of rows of diagonal blocks in $M^{(k,j_k)}(F,G)$ is equal
  to
  \begin{equation}
    \label{eq:recsubresmatrow-up}
    (m+n-2j_1)\left\{\prod_{l=2}^{k-1}(2j_{l-1}-2j_l-1)-1\right\}
    (2j_{k-1}-2j-1).
  \end{equation}
  By adding formulas (\ref{eq:recsubresmatrow-low}) and
  (\ref{eq:recsubresmatrow-up}), we obtain (\ref{eq:recsubresmatrow}).
\item The number of columns of $M^{(k-1,j_{k-1})}(F,G)$ is equal to
  $(m+n-2j_1)\{\prod_{l=2}^{k-1}(2j_{l-1}-2j_l-1)\}$, hence the number
  of columns of $M^{(k,j)}(F,G)$ is equal to
  (\ref{eq:recsubresmatcol}).
\end{enumerate}
This proves the proposition.
\qed
\end{proof}

Now, we define recursive subresultants by recursive subresultant
matrices.

\begin{definition}[Recursive Subresultant]
  \label{def:recsubres}
  Let $F$ and $G$ be defined as in \textup{(\ref{eq:fg})}, and let
  $(P_1^{(1)},\ldots,$
  $P_{l_1}^{(1)},\ldots,P_1^{(t)},\ldots,P_{l_t}^{(t)})$ be complete
  recursive PRS for $F$ and $G$ as in Definition~\ref{def:recprs}.
  Let $j_0=m$ and $j_k=n_l^{(k)}$ for $k=1,\ldots,t$.  For
  $j=j_{k-1}-2,\ldots,0$ and $\tau=j,\ldots,0$, let
  $M_\tau^{(k,j)}=M_\tau^{(k,j)}(F,G)$ be a sub-matrix of the
  $(k,j)$-th recursive subresultant matrix $M^{(k,j)}(F,G)$ obtained
  by taking the top
  $(m+n-2j_1)\{\prod_{l=2}^{k-1}(2j_{l-1}-2j_l-1)\}(2j_{k-1}-2j-1)-1$
  rows and the
  $((m+n-2j_1)\{\prod_{l=2}^{k-1}(2j_{l-1}-2j_l-1)\}(2j_{k-1}-2j-1)
  +j-\tau)$-th row (note that $M_\tau^{(k,j)}$ is a square matrix).
  Then, the polynomial
  \begin{equation}
    \recsubres_{k,j}(F,G)
    =\det(M_j^{(k,j)})x^j+\cdots+\det(M_0^{(k,j)})x^0
  \end{equation}
  is called the \emph{$(k,j)$-th recursive subresultant} of
  $F$ and $G$. \qed
\end{definition}

Finally, we derive the relation between recursive subresultants and
coefficients in recursive PRS.

\begin{lemma}
  \label{lem:recsubres}
  Let $F$ and $G$ be defined as in \textup{(\ref{eq:fg})}, and let
  $(P_1^{(1)},\ldots,P_{l_1}^{(1)},\ldots,P_1^{(t)},\ldots,P_{l_t}^{(t)})$
  be complete recursive PRS for $F$ and $G$ as in
  Definition~\ref{def:recprs}.  Let $c_i^{(k)}=\lc(P_i^{(k)})$,
  $n_i^{(k)}=\deg(P_i^{(k)})$, $j_0=m$ and $j_k=n_l^{(k)}$ for
  $k=1,\ldots,t$ and $i=1,\ldots,l_k$, and let
  $d_i^{(k)}=n_i^{(k)}-n_{i+1}^{(k)}$ for $k=1,\ldots,t$ and
  $i=1,\ldots,l_k-1$.  Furthermore, for $k=1,\ldots,t-1$ and
  $j=j_{k-1}-2,\ldots,0$, define 
  \begin{equation}
    \begin{split}
      u_{k,j} &= (m+n-2j_1)
      \left\{
        \prod_{l=2}^{k-1}(2j_{l-1}-2j_l-1)
      \right\}
      (2j_{k-1}-2j-1)
      ,
      \\
      B_k &= (c_{l_k}^{(k)})^{d_{l_k-1}^{(k)}-1}
      \prod_{l=3}^{l_k}\left\{
      \left(
        \frac{\beta_l^{(k)}}{\alpha_l^{(k)}}
      \right)^{n_{l-1}^{(k)}-n_{l_k}^{(k)}}
      (c_{l-1}^{(k)})^{(d_{l-2}^{(k)}+d_{l-1}^{(k)})}
      (-1)^{
        (n_{l-2}^{(k)}-n_{l_k}^{(k)})
        (n_{l-1}^{(k)}-n_{l_k}^{(k)})
      }
      \right\},
    \end{split}
  \end{equation}
  and let $u_k=u_{k,j_k}$.
  For $k=2,\ldots,t$ and $j=j_{k-1}-2,\ldots,0$, define
  \begin{equation}
    b_{k,j} = 2j_{k-1}-2j-1,\quad
    r_{k,j} = (-1)^{(u_{k-1}-1)(1+2+\cdots+(b_{k,j}-1))},
  \end{equation}
  and let $b_k=b_{k,j_k}$ and $r_k=r_{k,j_k}$.
  Then, for the $(k,j)$-th recursive subresultant of
  $F$ and $G$ with $k=1,\ldots,t$ and $j=j_{k-1}-2,\ldots,0$, we have
  \begin{equation}
    \label{eq:recsubreslem}
      \recsubres_{k,j}(F,G)
      =R_{k,j}\cdot\subres_j(P_1^{(k)},P_2^{(k)}),
  \end{equation}
  where $R_{1,j}=1$ and $R_{k,j}=((\cdots((B_1^{b_2}\cdot
  r_2B_2)^{b_3}\cdot r_3B_3)^{b_4}\cdots )^{b_{k-1}}\cdot
  r_{k-1}B_{k-1})^{b_{k,j}}\cdot r_{k,j}$ for $k>1$.
\end{lemma}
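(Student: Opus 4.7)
The plan is to proceed by induction on the recursion depth $k$. The base case $k=1$ is immediate from the definitions: $M^{(1,j)}(F,G)$ coincides with $N^{(j)}(F,G)$ and $(P_1^{(1)},P_2^{(1)})=(F,G)$, so $\recsubres_{1,j}(F,G)=\subres_j(P_1^{(1)},P_2^{(1)})$, matching $R_{1,j}=1$.

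For the inductive step, I would exploit the block structure of $M^{(k,j)}(F,G)$ displayed in Figure~\ref{fig:recsubresmat}. Each of the $b_{k,j}=2j_{k-1}-2j-1$ ``block columns'' consists of a shifted copy of $M_U^{(k-1,j_{k-1})}$ stacked above either a shifted copy of $M_L^{(k-1,j_{k-1})}$ (for the first $j_{k-1}-j-1$ block columns) or of $M_L^{'(k-1,j_{k-1})}$ (for the remaining $j_{k-1}-j$ block columns). The crucial observation is that recombining the upper $M_U$ part with the corresponding lower $M_L$ part in a single block column recovers a shifted square subblock of $M^{(k-1,j_{k-1})}(F,G)$, whose determinants, by the induction hypothesis and Theorem~\ref{th:fundsubres} applied at level $k-1$, are precisely the coefficients of $R_{k-1,j_{k-1}}\cdot\subres_{j_{k-1}}(P_1^{(k-1)},P_2^{(k-1)})=R_{k-1,j_{k-1}}\cdot B_{k-1}\cdot P_1^{(k)}$. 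The row-scaling by $\tau$ and bottom-row-deletion that transform $M_L^{(k-1,j_{k-1})}$ into $M_L^{'(k-1,j_{k-1})}$ implement precisely the derivative operator on these coefficients, so the analogous recombination using $M_L'$ produces coefficients of $R_{k-1,j_{k-1}}\cdot B_{k-1}\cdot P_2^{(k)}$.

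I would then apply a generalized Laplace (Cauchy--Binet-type) expansion along the rows occupied by the upper $M_U$ blocks: the determinant of a square submatrix $M_\tau^{(k,j)}$ splits into a signed sum of products, each factor being a square sub-determinant of a shifted $M^{(k-1,j_{k-1})}$. After substituting the induction hypothesis and pulling out the common factor $(R_{k-1,j_{k-1}}\cdot B_{k-1})^{b_{k,j}}$, the residual signed sum reassembles, block column by block column, into the coefficient $\det(N_\tau^{(j)}(P_1^{(k)},P_2^{(k)}))$ of $\subres_j(P_1^{(k)},P_2^{(k)})$. Tracking the row permutations needed to align the interspersed zero rows between the $M_L$ and $M_L'$ bands with the canonical Sylvester layout produces the signature $r_{k,j}=(-1)^{(u_{k-1}-1)(1+2+\cdots+(b_{k,j}-1))}$; combined with the accumulated level-$(k-1)$ contribution, this assembles into $R_{k,j}$ exactly as defined.

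The main obstacle will be making the Laplace-type expansion fully rigorous and tracking the signs correctly. The row zero-padding visible between the $M_L^{(k-1,j_{k-1})}$ and $M_L^{'(k-1,j_{k-1})}$ bands, together with the downward shifts of successive blocks, forces a row permutation whose length is proportional to $u_{k-1}-1$ per consecutive pair of block columns; verifying that the accumulated sign matches $r_{k,j}$ requires careful index bookkeeping, which I would organize by first treating the ``pure'' $M_L$-only and $M_L'$-only cases separately and then handling the crossover between the two bands. A secondary but non-trivial check is that the row multipliers $\tau$ appearing in the definition of $M_L^{'(k-1,j_{k-1})}$ interact with the Laplace expansion to reproduce exactly the coefficients of $\frac{d}{dx}P_1^{(k)}$, with no stray factorial-like factors.
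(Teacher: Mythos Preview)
Your inductive strategy and identification of the key ingredients---the block structure, the recognition that each block column encodes a shifted copy of $M^{(k-1,j_{k-1})}$ (or its derivative-scaled variant $M^{'(k-1,j_{k-1})}$), and the appearance of the factor $(R_{k-1,j_{k-1}}B_{k-1})^{b_{k,j}}$---match the paper's proof.  The difference is purely in execution: rather than a Laplace/Cauchy--Binet expansion along the $M_U$ rows, the paper applies \emph{column} operations within each block column.  Concretely, one first transforms $M^{(k-1,j_{k-1})}(F,G)$ by column eliminations and exchanges (the Brown--Traub reductions on the inner $N^{(j_{k-1})}$ block) into a form $\bar{M}^{(k-1,j_{k-1})}$ whose rightmost column is the coefficient vector $\vec{p}_1^{(k)}$ of $P_1^{(k)}$, with the remaining $u_{k-1}-1$ columns forming a block-lower-triangular matrix (a block $W_{k-1}$ of determinant $R_{k-1,j_{k-1}}$ on top of a unit-diagonal $\bar{N}_U^{(j_{k-1})}$).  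The same operations send $M^{'(k-1,j_{k-1})}$ to a matrix with $\vec{p}_2^{(k)}$ in the last column.  Applying these column operations simultaneously to each of the $b_{k,j}$ block columns of $M^{(k,j)}(F,G)$ extracts the factor $B_{k-1}^{b_{k,j}}$ and leaves a single ``coefficient column'' $\vec{p}_1^{(k)}$ or $\vec{p}_2^{(k)}$ at the right of each block.  One global column permutation then gathers these $b_{k,j}$ columns to the far right, where they form precisely $N^{(j)}(P_1^{(k)},P_2^{(k)})$, and the full matrix becomes block lower-triangular with diagonal blocks $W_{k-1}$, $\bar{N}_U^{(j_{k-1})}$, and $N^{(j)}(P_1^{(k)},P_2^{(k)})$.

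This column-operation route gives exactly what your Laplace expansion would, but eliminates the bookkeeping you flag as the main obstacle: the only sign to track is that of the final column permutation, which moves the column in position $u_{k,j}-(l-1)u_{k-1}$ to position $u_{k,j}-(l-1)$ for $l=1,\ldots,b_{k,j}$, i.e.\ $(l-1)(u_{k-1}-1)$ transpositions per column, and summing over $l$ gives precisely $r_{k,j}$.  Your ``secondary check'' that the row multipliers in $M_L^{'(k-1,j_{k-1})}$ reproduce the derivative without stray factors also becomes automatic, since the column operations act only on columns and therefore commute with the row scalings that define $M_L'$.
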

\begin{proof}
  For a univariate polynomial $P(x)=a_nx^n+\cdots+a_0x^0$ with $a_j\in
  R$ for $j=0,\ldots,n$, let us denote the coefficient vector for
  $P(x)$ by $\vec{p}={}^t(a_n,\ldots,a_0)$.
  
  We prove the lemma by induction on $k$.  For $k=1$, it follows
  immediately from the Fundamental Theorem of subresultants
  (Theorem~\ref{th:fundsubres}).  Let us assume that
  (\ref{eq:recsubreslem}) is valid for $1,\ldots,k-1$.  Then, we have
  \begin{equation}
    \recsubres_{k-1,j_{k-1}}(F,G)
      =R_{k-1,j_{k-1}}\cdot\subres_{j_{k-1}}(P_1^{(k-1)},P_2^{(k-1)}),
  \end{equation}
  hence we have
  \begin{equation}
    \det(M_\tau^{(k-1,j_{k-1})})=
    R_{k-1,j_{k-1}}\cdot\det(N_\tau^{(j_{k-1})}(P_1^{(k-1)},P_2^{(k-1)})),
  \end{equation}
  for $\tau=j_{k-1},\ldots,0$.  Therefore, there exists a matrix
  $W_{k-1}$ such that $\det(W_{k-1})=R_{k-1,j_{k-1}}$ and that we can
  transform $M^{(k-1,j_{k-1})}(F,G)$ to
  \begin{equation}
    \tilde{M}^{(k-1,j_{k-1})}(F,G)
    =
    \left(
      \begin{array}{c|c}
        W_{k-1} & \vec{O}\\
        \hline
        *       & N^{(j_{k-1})}(P_1^{(k-1)},P_2^{(k-1)})
      \end{array}
    \right),
  \end{equation}
  by eliminations on columns.  Furthermore, by eliminations and
  exchanges on columns\linebreak
  in the block
  $N^{(j_{k-1})}(P_1^{(k-1)},P_2^{(k-1)})$ as shown in Brown and Traub
  (\cite{bro-tra71}), we can transform\linebreak
  $\tilde{M}^{(k-1,j_{k-1})}(F,G)$ to
  \begin{equation}
    \bar{M}^{(k-1,j_{k-1})}(F,G)=
    \left(
      \begin{array}{c|c|c}
        W_{k-1} & \multicolumn{2}{c}{\vec{O}}\\
        \cline{1-2}
         & \bar{N}_U^{(j_{k-1})} &       \\
        \cline{2-3}
        \multicolumn{2}{l|}{\;\;*}
        & \vec{p}_1^{(k)}
      \end{array}
    \right),
  \end{equation}
  such that $\bar{N}_U^{(j_{k-1})}$ is a lower triangular matrix with
  all diagonal elements equal to 1\linebreak and that
  \mbox{$\det(\tilde{M}_\tau^{(k-1,j_{k-1})}(F,G))=
    B_{k-1}\cdot\det(\bar{M}_\tau^{(k-1,j_{k-1})}(F,G))$}, where
  $\tilde{M}_\tau^{(k-1,j_{k-1})}(F,G)$\linebreak and
  $\bar{M}_\tau^{(k-1,j_{k-1})}(F,G)$ are sub-matrices of
  $\tilde{M}^{(k-1,j_{k-1})}(F,G)$ and
  $\bar{M}^{(k-1,j_{k-1})}(F,G))$, respectively, obtained by the same
  manner as we have obtained $M_\tau^{(k-1,j_{k-1})}(F,G)$.
  Therefore, we have
  \begin{equation}
    \label{eq:mbardetk}
    \det(M_\tau^{(k-1,j_{k-1})}(F,G))=
    B_{k-1}\cdot\det(\bar{M}_\tau^{(k-1,j_{k-1})}(F,G)).
  \end{equation}
  Similarly, let $M^{'(k-1,j_{k-1})}(F,G)=
  \begin{pmatrix}
    M_U^{(k-1,j_{k-1})} \\
    \hline
    M_L^{'(k-1,j_{k-1})}
  \end{pmatrix}
  $.  Then, by the same transformations in the above, we can transform
  $M^{'(k-1,j_{k-1})}(F,G)$ to
  \begin{equation}
    \bar{M}^{'(k-1,j_{k-1})}(F,G)=
    \left(
      \begin{array}{c|c|c}
        W_{k-1} & \multicolumn{2}{c}{\vec{O}}\\
        \cline{1-2}
         & \bar{N}_U^{(j_{k-1})} &\\
        \cline{2-3}
        \multicolumn{2}{l|}{\;\;*}
        & \vec{p}_2^{(k)}
      \end{array}
    \right),
  \end{equation}
  satisfying
  \begin{equation}
    \label{eq:mbardetk-1}
    \det(M_\tau^{'(k-1,j_{k-1})}(F,G))=
    B_{k-1}\cdot\det(\bar{M}_\tau^{'(k-1,j_{k-1})}(F,G)),
  \end{equation}
  where $M_\tau^{'(k-1,j_{k-1})}(F,G)$ and
  $\bar{M}_\tau^{'(k-1,j_{k-1})}(F,G))$ are sub-matrices of
  $M^{'(k-1,j_{k-1})}(F,G)$ and\linebreak
  $\bar{M}^{'(k-1,j_{k-1})}(F,G))$, respectively, obtained by taking
  the top $u_{k-1}-1$ rows and the $(u_{k-1}+j_{k-1}-\tau)$-th row for
  $\tau=j_{k-1}\ldots,1$.  Therefore, for \mbox{$j<j_{k-1}-1$}, we can
  transform $M^{(k,j)}(F,G)$ to $\bar{M}^{(k,j)}(F,G)$ as shown in
  Fig.~\ref{fig:recsumbresmatbar} by eliminations and exchanges on
  columns in each column block, and let $\bar{M}_\tau^{(k,j)}(F,G)$ be
  sub-matrix of $\bar{M}^{(k,j)}(F,G)$ obtained by the same manner as
  we have obtained $M_\tau^{(k,j)}(F,G)$.
  \begin{figure}[tb]
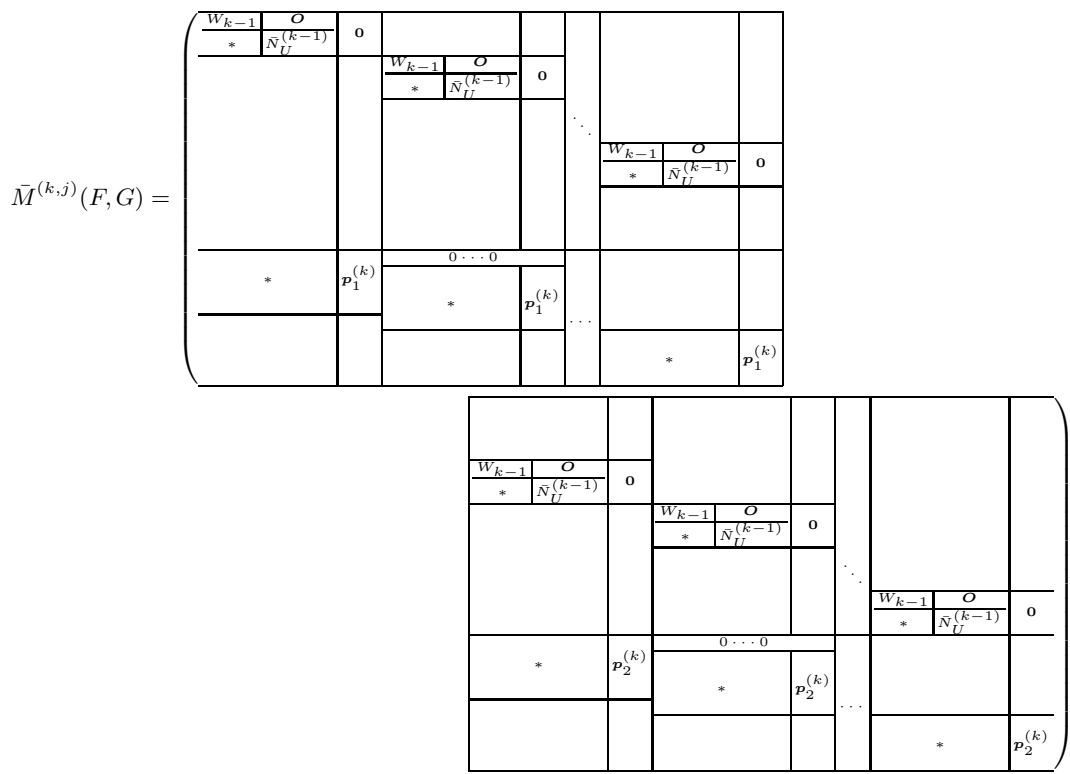

    \centering
  \begin{multline*}
    \bar{M}^{(k,j)}(F,G)=
    \left(
      \tiny
      \begin{array}{c|c|c|c|c|c|c|}
        \cline{1-7}
        \begin{array}{c|c}
          W_{k-1} & \vec{O}\\
          \hline
          * & \bar{N}_U^{(k-1)}
        \end{array}
        &
        \vec{0}
        & & & & &\\
        \cline{1-4}
        & &
        \begin{array}{c|c}
          W_{k-1} & \vec{O}\\
          \hline
          * & \bar{N}_U^{(k-1)}
        \end{array}
        &
        \vec{0}
        & & & \\
        \cline{3-4}
        & & & & \ddots & & \\
        \cline{6-7}
        & & & & &
        \begin{array}{c|c}
          W_{k-1} & \vec{O}\\
          \hline
          * & \bar{N}_U^{(k-1)}
        \end{array}
        &
        \vec{0}
        \\
        \cline{6-7}
        & & & & & &\\
        & & & & & &\\
        & & & & & &\\
        & & & & & &\\
        \cline{1-7}
        & & \multicolumn{2}{|c|}{0\cdots 0} & & &\\
        \cline{3-4}
        * &
        \vec{p}_1^{(k)} & & & & &\\
        & &
        * &
        \vec{p}_1^{(k)} & & &\\
        \cline{1-2}
        & & & & \cdots & &\\
        \cline{3-4}
        \cline{6-7}
        & & & & & &\\
        & & & & &
        * &
        \vec{p}_1^{(k)}\\
        & & & & & &\\
        \cline{1-7}
      \end{array}
    \right.
    \\
    \left.
      \tiny
      \begin{array}{|c|c|c|c|c|c|c}
        \cline{1-7}
        & & & & & & \\
        & & & & & & \\
        & & & & & & \\
        & & & & & & \\
        \cline{1-2}
        \begin{array}{c|c}
          W_{k-1} & \vec{O}\\
          \hline
          * & \bar{N}_U^{(k-1)}
        \end{array}
        &
        \vec{0}
        & & & & &\\
        \cline{1-4}
        & &
        \begin{array}{c|c}
          W_{k-1} & \vec{O}\\
          \hline
          * & \bar{N}_U^{(k-1)}
        \end{array}
        &
        \vec{0}
        & & & \\
        \cline{3-4}
        & & & & \ddots & & \\
        \cline{6-7}
        & & & & &
        \begin{array}{c|c}
          W_{k-1} & \vec{O}\\
          \hline
          * & \bar{N}_U^{(k-1)}
        \end{array}
        &
        \vec{0}
        \\
        \cline{1-7}
        & & \multicolumn{2}{|c|}{0\cdots 0} & & & \\
        \cline{3-4}
        * &
        \vec{p}_2^{(k)} & & &
        & & \\
        & &
        * &
        \vec{p}_2^{(k)} &
        & &  \\
        \cline{1-2}
        & & & & \cdots & & \\
        \cline{3-4}
        \cline{6-7}
        & & & & & & \\
        & & & & &
        * &
        \vec{p}_2^{(k)}\\
        & & & & & & \\
        \cline{1-7}
      \end{array}
    \right).
  \end{multline*}
    \caption{Illustration of
      $\bar{M}^{(k,j)}(F,G)$.  See Lemma~\ref{lem:recsubres} for
      details.}
    \label{fig:recsumbresmatbar}
  \end{figure}
  Then, we have
  \begin{equation}
    \label{eq:rel-m-mbar}
    \det(M_\tau^{(k,j)}(F,G))=
    (B_{k-1})^{b_{k,j}}\cdot\det(\bar{M}_\tau^{(k,j)}(F,G)),
  \end{equation}
  from (\ref{eq:mbardetk}) and (\ref{eq:mbardetk-1}) and since there
  exist $b_{k,j}$ blocks of $\bar{M}^{(k-1,j_{k-1})}(F,G)$ and
  $\bar{M}^{'(k-1,j_{k-1})}(F,G)$ in $\bar{M}^{(k,j)}(F,G)$ with each
  of which divided into the upper and the lower block.
  
  Furthermore, by exchanges on columns, we can transform
  $\bar{M}^{(k,j)}(F,G)$ to $\hat{M}^{(k,j)}(F,G)$ as shown in
  Fig.~\ref{fig:recsubresmathat}, and let $\hat{M}_\tau^{(k,j)}(F,G)$
  be sub-matrix of $\hat{M}^{(k,j)}(F,G)$ obtained by the same
  manner as we have obtained $M_\tau^{(k,j)}(F,G)$.
    \begin{figure}[h]
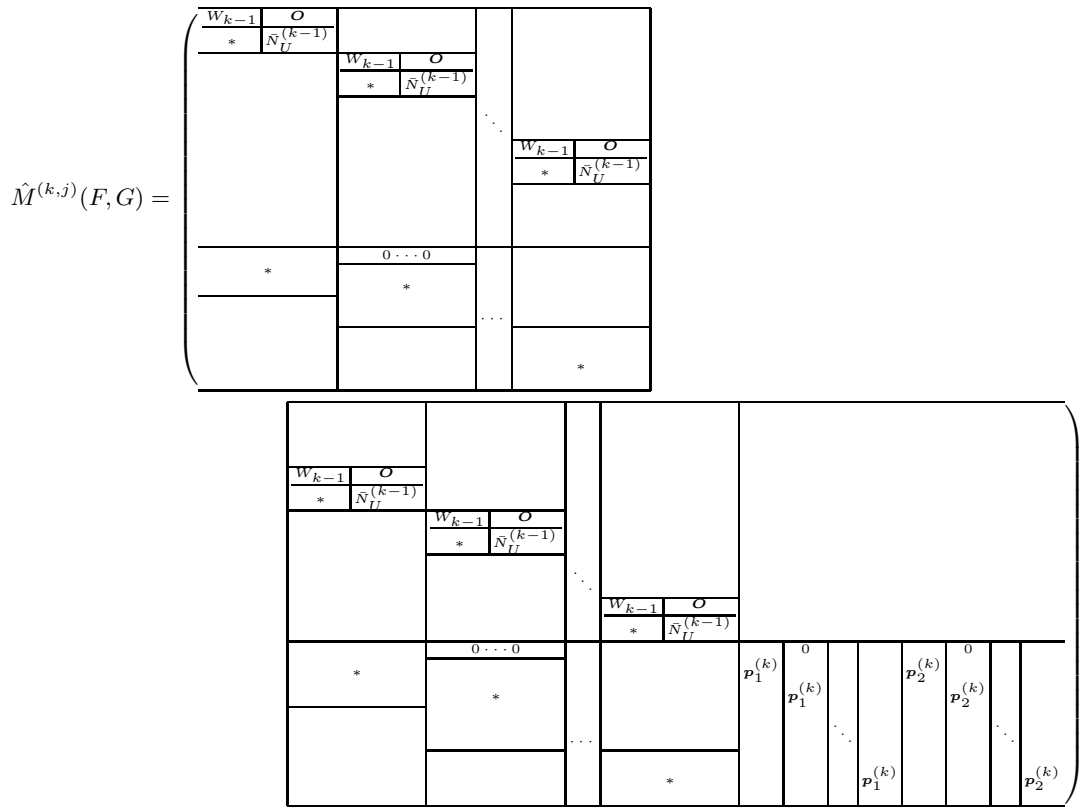

    \centering
    \begin{multline*}
      \hat{M}^{(k,j)}(F,G)=
      \tiny
      \left(
        \begin{array}{c|c|c|c|}
          \hline
          \begin{array}{c|c}
            W_{k-1} & \vec{O}\\
            \hline
            * & \bar{N}_U^{(k-1)}
          \end{array}
          & & &
          \\
          \cline{1-2}
          &
          \begin{array}{c|c}
            W_{k-1} & \vec{O}\\
            \hline
            * & \bar{N}_U^{(k-1)}
          \end{array}
          & &
          \\
          \cline{2-2}
          & & \ddots &
          \\
          \cline{4-4}
          & & &
          \begin{array}{c|c}
            W_{k-1} & \vec{O}\\
            \hline
            * & \bar{N}_U^{(k-1)}
          \end{array}
          \\
          \cline{4-4}
          & & &
          \\
          & & &
          \\
          & & &
          \\
          & & &
          \\
          \hline
          & 0\cdots 0 & &
          \\
          \cline{2-2}
          *
          & & &
          \\
          &
          *
          & &
          \\
          \cline{1-1}
          & & &
          \\
          & & \cdots &
          \\
          \cline{2-2}\cline{4-4}
          & & &
          \\
          & & &
          \\
          & & &
          *
          \\
          & & &
          \\
          \hline
        \end{array}
      \right.
      \\
      \tiny
      \left.
        \begin{array}{|c|c|c|c|c|c|c|c|c|c|c|c}
          \hline
          & & &\\
          & & &\\
          & & &\\
          & & & \\
          \cline{1-1}
          \begin{array}{c|c}
            W_{k-1} & \vec{O}\\
            \hline
            * & \bar{N}_U^{(k-1)}
          \end{array}
          & & &
          \\
          \cline{1-2}
          &
          \begin{array}{c|c}
            W_{k-1} & \vec{O}\\
            \hline
            * & \bar{N}_U^{(k-1)}
          \end{array}
          & &
          \\
          \cline{2-2}
          & & \ddots & 
          \\
          \cline{4-4}
          & & &
          \begin{array}{c|c}
            W_{k-1} & \vec{O}\\
            \hline
            * & \bar{N}_U^{(k-1)}
          \end{array}
          \\
          \hline
          & 0\cdots 0 & & & 
          & 0 & & &
          & 0 & &
          \\
          \cline{2-2}
          *
          & & & &  
          \vec{p}_1^{(k)} & & & &
          \vec{p}_2^{(k)} & & &
          \\
          & 
          *
          & & &
          & \vec{p}_1^{(k)} & & &
          & \vec{p}_2^{(k)} & &
          \\
          \cline{1-1}
          & & \cdots & & 
          & & \ddots & &
          & & \ddots &
          \\
          \cline{2-2}\cline{4-4}
          & & & &
          & & & &
          & & &
          \\
          & & &
          *
          & & & & \vec{p}_1^{(k)} &
          & & & \vec{p}_2^{(k)}
          \\
          & & & &
          & & & &
          & & &
          \\
          \hline
        \end{array}
      \right)
    \end{multline*}
    \caption{Illustration of $\hat{M}^{(k,j)}(F,G)$.  Note 
      that the lower-right block which consists of $\vec{p}_1^{(k)}$
      and $\vec{p}_2^{(k)}$ is equal to
      $N^{(j_k)}(P_1^{(k)},P_2^{(k)})$, and the number of blocks
      $W_{k-1}$ and $\bar{N}_{U}^{(k-1)}$ is $b_{k,j}=2j_{k-1}-2j-1$:
      see Lemma~\ref{lem:recsubres} for details.}
    \label{fig:recsubresmathat}
  \end{figure}
  Then, we have
  \begin{equation}
    \label{eq:rel-mbar-mhat}
    \det(\bar{M}_\tau^{(k,j)}(F,G))=
    r_{k,j}\cdot \det(\hat{M}_\tau^{(k,j)}(F,G)),
  \end{equation}
  because the $(u_{k,j}-(l-1)u_{k-1})$-th column in
  $\bar{M}^{(k,j)}(F,G)$ was moved to the $(u_{k,j}-(l-1))$-th column
  in $\hat{M}^{(k,j)}(F,G)$ for \mbox{$l=1,\ldots,b_{k,j}$}.
  Furthermore, we have
  \begin{equation}
    \label{eq:rel-mhat-subres}
    \det(\hat{M}_\tau^{(k,j)}(F,G))=
    (R_{k-1,j_{k-1}}B_{k-1})^{b_{k,j}}\cdot
    N_\tau^{(j)}(P_1^{(k)},P_2^{(k)}), 
  \end{equation}
  because the lower-right block of $\vec{p}_1^{(k)}$ and
  $\vec{p}_2^{(k)}$ in $\hat{M}^{(k,j)}(F,G)$ is equal to
  $N^{(j)}(P_1^{(k)},P_2^{(k)})$.

  Finally, from (\ref{eq:rel-m-mbar}), (\ref{eq:rel-mbar-mhat}) and
  (\ref{eq:rel-mhat-subres}), we have
  \begin{equation}
    \begin{split}
      \det(M_\tau^{(k,j)}(F,G)) &=
      r_{k,j}\cdot (R_{k-1,j_{k-1}}B_{k-1})^{b_{k,j}}\cdot
      \det(N_\tau^{(j)}(P_1^{(k)},P_2^{(k)}))
      \\
      &= R_{k,j}\cdot \det(N_\tau^{(j)}(P_1^{(k)},P_2^{(k)})).
    \end{split}
  \end{equation}
  Therefore, by the definitions of recursive subresultant, we obtain
  (\ref{eq:recsubreslem}).  This proves the lemma.  \qed
\end{proof}

\begin{theorem}
  With the same conditions as in  Lemma~\ref{lem:recsubres}, and for
  $k=1,\ldots,t$ and $i=3,4,\ldots,l_k$, we have
  \begin{align}
    \label{eq:recsubresthm1}
    &\recsubres_{k,j}(F,G) = 0 \quad\mbox{for $0\le j<n_{l_k}^{(k)}$},
    \\
    &\recsubres_{k,n_i^{(k)}}(F,G) = P_i^{(k)}
    (c_i^{(k)})^{d_{i-1}^{(k)}-1} R_{k,n_i^{(k)}}\nonumber
    \\
    & \qquad \times \prod_{l=3}^{i} \left\{ \left(
        \frac{\beta_l^{(k)}}{\alpha_l^{(k)}}
      \right)^{n_{l-1}^{(k)}-n_i^{(k)}}
      (c_{l-1}^{(k)})^{(d_{l-2}^{(k)}+d_{l-1}^{(k)})} (-1)^{
        (n_{l-2}^{(k)}-n_{i}^{(k)}) (n_{l-1}^{(k)}-n_{i}^{(k)}) }
    \right\},
    \\
    &\recsubres_{k,j}(F,G) = 0 \quad\mbox{for
      $n_i^{(k)}<j<n_{i-1}^{(k)}-1$},
    \\
    \label{eq:recsubresthm4}
    &\recsubres_{k,n_{i-1}^{(k)}-1}(F,G) = P_i^{(k)}
    (c_{i-1}^{(k)})^{1-d_{i-1}^{(k)}}R_{k,n_{i-1}^{(k)}-1} \nonumber\\
    & \qquad \times \prod_{l=3}^{i}
    \left\{
      \left(
        \frac{\beta_l^{(k)}}{\alpha_l^{(k)}}
      \right)^{n_{l-1}^{(k)}-n_{i-1}^{(k)}+1}
      (c_{l-1}^{(k)})^{(d_{l-2}^{(k)}+d_{l-1}^{(k)})} (-1)^{
        (n_{l-2}^{(k)}-n_{i-1}^{(k)}+1)
        (n_{l-1}^{(k)}-n_{i-1}^{(k)}+1) }
    \right\}.
  \end{align}
\end{theorem}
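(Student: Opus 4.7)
The plan is to reduce the theorem to a direct combination of Lemma~\ref{lem:recsubres} with the Fundamental Theorem of Subresultants (Theorem~\ref{th:fundsubres}) applied to each constituent PRS. By Lemma~\ref{lem:recsubres}, for every $k=1,\ldots,t$ and every $j$ in the allowed range,
\begin{equation*}
\recsubres_{k,j}(F,G) = R_{k,j}\cdot\subres_j(P_1^{(k)},P_2^{(k)}),
\end{equation*}
so it suffices to identify the factor $\subres_j(P_1^{(k)},P_2^{(k)})$ for each $k$ via Theorem~\ref{th:fundsubres} and then carry the prefactor $R_{k,j}$ through each of the four cases.

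Concretely, for each fixed $k$ I would invoke Theorem~\ref{th:fundsubres} on the sequence $(P_1^{(k)},P_2^{(k)},\ldots,P_{l_k}^{(k)})$, which by Definition~\ref{def:recprs} coincides with $\prs(P_1^{(k)},P_2^{(k)})$ and terminates at $\gamma_k\cdot\gcd(P_1^{(k)},P_2^{(k)})$. Each of the four cases in Theorem~\ref{th:fundsubres} then transfers, up to the factor $R_{k,j}$, into one of the four cases of the present theorem: the vanishing of $\subres_j(P_1^{(k)},P_2^{(k)})$ for $0\le j<n_{l_k}^{(k)}$ yields (\ref{eq:recsubresthm1}); the principal subresultant formula at $j=n_i^{(k)}$ multiplied by $R_{k,n_i^{(k)}}$ yields the second equation; the vanishing in the degree gaps $n_i^{(k)}<j<n_{i-1}^{(k)}-1$ yields the third; and the defective subresultant formula at $j=n_{i-1}^{(k)}-1$ multiplied by $R_{k,n_{i-1}^{(k)}-1}$ yields (\ref{eq:recsubresthm4}).

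Since Lemma~\ref{lem:recsubres} already encapsulates the substantive argument, namely the column eliminations and exchanges that relate $\recsubres_{k,j}(F,G)$ to $\subres_j(P_1^{(k)},P_2^{(k)})$, no genuine obstacle remains: the proof is a matter of substitution and index bookkeeping. The one subtlety worth remarking is that Theorem~\ref{th:fundsubres} is stated for complete PRS (terminating at a constant), whereas at intermediate levels $k<t$ the sequence $(P_1^{(k)},\ldots,P_{l_k}^{(k)})$ terminates at $\gamma_k\cdot\gcd(P_1^{(k)},P_2^{(k)})$, which may have positive degree; however, formulas (\ref{eq:subresthm1})--(\ref{eq:subresthm4}) depend only on the coefficients and degrees of the $P_i^{(k)}$ actually occurring in the PRS and remain valid in this slightly more general situation by the same argument as in Brown and Traub. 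Writing out the four resulting identities then completes the proof.
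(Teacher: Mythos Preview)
Your proposal is correct and follows essentially the same approach as the paper: the paper's proof is the one-line observation that substituting the formulas (\ref{eq:subresthm1})--(\ref{eq:subresthm4}) from Theorem~\ref{th:fundsubres} into the identity (\ref{eq:recsubreslem}) of Lemma~\ref{lem:recsubres} immediately yields (\ref{eq:recsubresthm1})--(\ref{eq:recsubresthm4}). Your remark about Theorem~\ref{th:fundsubres} being stated for complete PRS while the intermediate sequences need not be complete is a worthwhile caveat that the paper does not explicitly address.
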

\begin{proof}
  By substituting $\subres_j(P_1^{(k)},P_2^{(k)})$ in
  (\ref{eq:recsubreslem}) by
  (\ref{eq:subresthm1})--(\ref{eq:subresthm4}), we obtain
  (\ref{eq:recsubresthm1})--(\ref{eq:recsubresthm4}), respectively.
  \qed
\end{proof}

We show an example of the proof of Lemma~\ref{lem:recsubres} for
recursive subresultant matrix in Example~\ref{ex:recsubresmat}.
\begin{example}
  Let us express $P_i^{(k)}$ in Example~\ref{ex:recsturmseq} by
  \begin{equation}
    P_i^{(k)}(x)=a_{i,n_i^{(k)}}^{(k)}x^{n_i^{(k)}}
    +\cdots+a_{i,0}^{(k)}x^0,
  \end{equation}
  with $n_i^{(k)}=\deg(P_i^{(k)})$.
  By eliminations and exchanges of columns as shown in Brown and
  Traub~(\cite{bro-tra71}), we can transform
  $M^{(1,5)}(F,G)=
  \begin{pmatrix}
    M_U^{(1,5)} \\
    \hline
    M_L^{(1,5)}
  \end{pmatrix}
  $ and
  $M^{'(1,5)}(F,G)=
  \begin{pmatrix}
    M_U^{(1,5)} \\
    \hline
    M_L^{'(1,5)}
  \end{pmatrix}
  $ in (\ref{eq:recsubresmatex}) to
  $\bar{M}^{(1,5)}(F,G)$ and $\bar{M}^{'(1,5)}(F,G)$, respectively, as
  \begin{equation}
    \begin{split}
      \bar{M}^{(1,5)}(F,G)
      &=
      \left(
        \begin{array}{c|c}
          \bar{N}_U^{(5)} & \vec{0} \\
          \hline
          *  & \vec{p}_1^{(2)}\\
        \end{array}
      \right)
      =
      \left(
      \begin{array}{cccc|c}
        1                   &                      &               &     &           \\
        \bar{a}_{2,6}^{(1)} & 1                    &               &     &           \\
        \bar{a}_{2,5}^{(1)} & \bar{a}_{2,6}^{(1)}  & 1             &     &    \\
        \bar{a}_{2,4}^{(1)} & \bar{a}_{2,5}^{(1)}  & \bar{a}_{3,5}^{(1)} & 1 & \\
        \hline
        \bar{a}_{2,3}^{(1)} & \bar{a}_{2,4}^{(1)}  & \bar{a}_{3,4}^{(1)} & \bar{a}_{3,5}^{(1)} &  a_{4,5}^{(1)} \\
        \bar{a}_{2,2}^{(1)} & \bar{a}_{2,3}^{(1)}  & \bar{a}_{3,3}^{(1)} & \bar{a}_{3,4}^{(1)} & a_{4,4}^{(1)} \\
        \bar{a}_{2,1}^{(1)} & \bar{a}_{2,2}^{(1)}  & \bar{a}_{3,2}^{(1)} & \bar{a}_{3,3}^{(1)} & a_{4,3}^{(1)} \\
        \bar{a}_{2,0}^{(1)} & \bar{a}_{2,1}^{(1)}  & \bar{a}_{3,1}^{(1)} & \bar{a}_{3,2}^{(1)} & a_{4,2}^{(1)} \\
                            & \bar{a}_{2,0}^{(1)}  & \bar{a}_{3,0}^{(1)} & \bar{a}_{3,1}^{(1)} & a_{4,1}^{(1)} \\
                            &                      &            & \bar{a}_{3,0}^{(1)} & a_{4,0}^{(1)} \\
      \end{array}
      \right)
      ,
      \\
      \bar{M}^{'(1,5)}(F,G) &=
      \left(
        \begin{array}{c|c}
          \bar{N}_U^{(5)} & \vec{0} \\
          \hline
          *  & \vec{p}_2^{(2)}\\
        \end{array}
      \right)
      =
      \left(
      \begin{array}{cccc|c}
        1                   &                &               &     &           \\
        \bar{a}_{2,6}^{(1)} & 1              &               &     &           \\
        \bar{a}_{2,5}^{(1)} & \bar{a}_{2,6}^{(1)}  & 1       &     &    \\
        \bar{a}_{2,4}^{(1)} & \bar{a}_{2,5}^{(1)}  & \bar{a}_{3,5}^{(1)} & 1 & \\
        \hline
        5\bar{a}_{2,3}^{(1)} & 5\bar{a}_{2,4}^{(1)} & 5\bar{a}_{3,4}^{(1)} & 5\bar{a}_{3,5}^{(1)} & 5a_{4,5}^{(1)} \\
        4\bar{a}_{2,2}^{(1)} & 4\bar{a}_{2,3}^{(1)} & 4\bar{a}_{3,3}^{(1)} & 4\bar{a}_{3,4}^{(1)} & 4a_{4,4}^{(1)} \\
        3\bar{a}_{2,1}^{(1)} & 3\bar{a}_{2,2}^{(1)} & 3\bar{a}_{3,2}^{(1)} & 3\bar{a}_{3,3}^{(1)} & 3a_{4,3}^{(1)} \\
        2\bar{a}_{2,0}^{(1)} & 2\bar{a}_{2,1}^{(1)} & 2\bar{a}_{3,1}^{(1)} & 2\bar{a}_{3,2}^{(1)} & 2a_{4,2}^{(1)} \\
                             & \bar{a}_{2,0}^{(1)}  & \bar{a}_{3,0}^{(1)}  & \bar{a}_{3,1}^{(1)} & a_{4,1}^{(1)} \\
      \end{array}
      \right),
    \end{split}
  \end{equation}
  where $\bar{a}_{i,j}^{(1)}=a_{i,j}^{(1)}/a_{2,7}^{(1)}$.
  Furthermore, we have 
  \begin{equation}
    \begin{split}
      \det(M_\tau^{(1,5)}(F,G)) &=
      B_1\cdot\det(\bar{M}_\tau^{(1,5)}(F,G))\quad
      \mbox{for $\tau=5,\ldots,0$},
      \\
      \det(M_\tau^{'(1,5)}(F,G)) &=
      B_1\cdot\det(\bar{M}_\tau^{'(1,5)}(F,G)) \quad
      \mbox{for $\tau=5,\ldots,1$},
    \end{split}
  \end{equation}
  with
  \begin{equation}
    B_1=-(a_{2,7}^{(1)})^2 (a_{3,6}^{(1)})^2,
  \end{equation}
  where $M_\tau^{(1,5)}(F,G)$ and $M_\tau^{'(1,5)}(F,G))$ are
  sub-matrices of $M^{(1,5)}(F,G)$ and $M^{'(1,5)}(F,G)$,
  respectively, obtained by taking the top 4 rows and the
  $(10-\tau)$-th row.  Therefore, by eliminations and exchanges on
  columns, we can transform $M^{(2,3)}(F,G)$ in
  (\ref{eq:recsubresmatex}) to $\bar{M}^{(2,3)}(F,G)$ as
  \begin{equation}
    \label{eq:recsubresmatbarex}
    \bar{M}^{(2,3)}(F,G)=
    \left(
      \begin{array}{c|c|c|c|c|c}
        \bar{N}_U^{(5)} & \vec{0} & & & &\\
        \cline{1-3}
                        &         &  \bar{N}_U^{(5)} & \vec{0} & &\\
        \cline{3-5}
                        &         &                  &         &
        \bar{N}_U^{(5)} & \vec{0} \\
        \hline
                        &         &                  &         & \multicolumn{2}{c}{0\cdots 0} \\
        \cline{5-6}
        *               & \vec{p}_1^{(2)} & * & \vec{p}_2^{(2)} &  &\\
                        &         &                  &          & * & \vec{p}_2^{(2)}\\
        \cline{3-4}
                        &         & \multicolumn{2}{c|}{0\cdots 0}
        & & \\
      \end{array}
    \right),
  \end{equation}
  satisfying
  $\det(M_\tau^{(2,3)}(F,G))=(B_1)^3\cdot\det(\bar{M}_\tau^{(2,3)}(F,G))$. 
  Furthermore, by exchanges on columns, we can transform
  $\bar{M}^{(2,3)}(F,G)$ to $\hat{M}^{(2,3)}(F,G)$ as
  \begin{equation}
    \label{eq:recsubresmathatex}
    \begin{split}
    \hat{M}^{(2,3)}(F,G) &=
    \left(
      \begin{array}{c|c|c|c|c|c}
        \bar{N}_U^{(5)}   \\
        \cline{1-2}
                        & \bar{N}_U^{(5)} \\
        \cline{2-3}
                        &      & \bar{N}_U^{(5)}    \\
        \hline
                        &         & {0\cdots 0}      &         & & 0  \\
        \cline{3-3}\cline{6-6}
        *               &  * & * & \vec{p}_1^{(2)} & \vec{p}_2^{(2)} &\\
                        &         &                  &          &  & \vec{p}_2^{(2)}\\
        \cline{2-2}\cline{5-5}
                        & {0\cdots 0}     &  &
        & 0 & \\
      \end{array}
    \right)
    \\
    &=
    \left(
      \begin{array}{c|c|c|c}
        \bar{N}_U^{(5)}   \\
        \cline{1-2}
                        & \bar{N}_U^{(5)} \\
        \cline{2-3}
        \multicolumn{2}{c|}{}  & \bar{N}_U^{(5)}    \\
        \hline
        \multicolumn{3}{c|}{*} & N^{(3)}(P_1^{(2)},P_2^{(2)})
      \end{array}
    \right),
    \end{split}
  \end{equation}
  satisfying
  $\det(\bar{M}_\tau^{(2,3)}(F,G))=r_{2,3}\cdot\det(\hat{M}_\tau^{(2,3)}(F,G))
  =r_{2,3}\cdot\det(N_\tau^{(3)}(P_1^{(2)},P_2^{(2)}))$.
  Therefore, we have
  \begin{equation}
    \det(M_\tau^{(2,3)}(F,G))=(B_1)^3r_{2,3}\cdot\det(N_\tau^{(3)}(P_1^{(2)},P_2^{(2)}))
    = R_{2,3}\cdot\det(N_\tau^{(3)}(P_1^{(2)},P_2^{(2)})),
  \end{equation}
  for $\tau=3,\ldots,0$, and we have
  \begin{equation}
    \recsubres_{2,3}(F,G)=R_{2,3}\cdot\subres_3(P_1^{(2)},P_2^{(2)})=
    \{(a_{2,7}^{(1)})^2(a_{3,6}^{(1)})^2\}^3(a_{2,4}^{(2)})^2\cdot P_3^{(2)}.
  \end{equation}
  \qed
\end{example}

\section{Conclusion and Motivation}
\label{sec:disc}

In this paper, we have defined recursive PRS as well as recursive
subresultants, and proved a similar theorem as the fundamental theorem
of subresultant.

The concept of recursive subresultant is inspired, in approximate
algebraic computation, by representing coefficients in recursive PRS
by the coefficients of initial polynomials.  For example, consider
calculating recursive Sturm sequence of a polynomial with
floating-point number coefficients by floating-point arithmetic.  In
the case the initial polynomial has multiple or close zeros, there may
exist a polynomial in the sequence such that it is difficult to decide
whether the polynomial becomes zero or not.  Also, zero recognition of
very small leading coefficient is another important problem because it
plays crucial role in calculating the number of real zeros.

For the problem of zero recognition of very small leading
coefficients, the present author and Sasaki (\cite{ter-sas2000}) have
proposed a criterion for calculating the number of real zeros
correctly by Sturm's method: if the Sturm sequence satisfy certain
condition on Sylvester matrix, then we can neglect the small leading
coefficient which makes computation of the Sturm sequence more stable.
We expect that the recursive subresultant (matrix) will be useful for
zero recognition of a polynomial in recursive Sturm sequence, by
representing its coefficients by the coefficients of initial
polynomials then analyzing it by numerical methods; this is the
problem on which we are working now.

\section*{Acknowledgments}

The author thank Prof.\ Tateaki Sasaki very much for revising the
original manuscript, and the referees for their helpful suggestions.


\end{document}